\numberwithin{equation}{section} 
\newcommand{\rank}{\operatorname{rank}}
\newcommand{\supp}{\operatorname{supp}}
\newcommand{\Spec}{\operatorname{Spec}}
\newcommand{\ord}{\operatorname{ord}}
\newcommand{\loc}{\operatorname{loc}}
\newcommand{\relint}{\operatorname{relint}}
\newcommand{\divv}{\operatorname{div}}
\newcommand{\Pol}{\operatorname{Pol}}
\newcommand{\CaDiv}{\operatorname{CaDiv}}
\newcommand{\Proj}{\operatorname{Proj}}
\newcommand{\coef}{\operatorname{coef}}
\newcommand{\rec}{\operatorname{rec}}
\newcommand{\pp}{\mathbb{P}}
\newcommand{\qq}{\mathbb{Q}}
\newcommand{\zz}{\mathbb{Z}}
\newcommand{\kk}{\mathbb{K}}
\newcommand{\TT}{\mathbb{T}}
\newcommand{\ZZ}{\mathbb{Z}}
\newcommand{\QQ}{\mathbb{Q}}
\newcommand{\DD}{\mathcal{D}}
\newcommand{\OO}{\mathcal{O}}
\newtheorem{introthm}{Theorem}
\newtheorem{theorem}{Theorem}[section]
\newtheorem{lemma}[theorem]{Lemma}
\newtheorem*{theorem*}{Theorem}
\theoremstyle{definition}
\newtheorem{notation}[theorem]{Notation}
\newtheorem{definition}[theorem]{Definition}
\newtheorem{example}[theorem]{Example}
\newtheorem{construction}[theorem]{Construction}
\newtheorem{remark}[theorem]{Remark}
\theoremstyle{remark}
\numberwithin{equation}{section}
\begin{document}

\title[Cohen-Macaulay Du Bois singularities with a torus action]{Cohen-Macaulay Du Bois singularities \protect\\
 with a torus action of complexity one}

\author{Antonio Laface}

\address{Departamento de Matem\'atica, Universidad de Concepci\'on,
  Casilla 160-C, Concepci\'on, Chile} \email{alaface@udec.cl}

\author{Alvaro Liendo}

\address{Instituto de Matem\'atica y F\'isica, Universidad de Talca,
  Casilla 721, Talca, Chile} \email{aliendo@inst-mat.utalca.cl}

\author{Joaqu\'\i n Moraga}

\address{Department of Mathematics, University of Utah, 155 S 1400 E,
  Salt Lake City, UT 84112} \email{moraga@math.utah.edu}

\subjclass[2010]{Primary 14B05, 
Secondary 14L30, 14M25. 
}

\thanks{ The first author was partially supported by Proyecto FONDECYT
  Regular N. 1150732.  The second author was partially supported by
  Proyecto FONDECYT Regular N. 1160864.}

\maketitle

\medskip 

\begin{abstract}
  Using Altmann-Hausen-S{\"u}\ss\ description of
  $\mathbb{T}$-varieties via divisorial fans and
  K\'ovacs-Schwede-Smith characterization of Du Bois singularities, we
  study Cohen-Macaulay Du Bois $\mathbb{T}$-singularities of complexity one. 
  We exhibit cohomological criteria for a $\mathbb{T}$-variety to be Cohen-Macaulay and Du Bois
  in terms of polyhedral divisors.
  We give an example of a Cohen-Macaulay Du Bois singularity  of complexity one which does not have rational singularities.
\end{abstract}

\setcounter{tocdepth}{1}
\tableofcontents

\section*{Introduction}

We study the singularities of $\mathbb{T}$-varieties, i.e. normal
algebraic varieties endowed with an effective action of an algebraic
torus $\mathbb{T}=(\kk^*)^k$, where $\kk$ is an algebraically closed
field of characteristic zero.  Given a $\mathbb{T}$-variety $X$ we
define its {\em complexity} to be $\dim(X)-\dim(\mathbb{T})$.
$\mathbb{T}$-varieties of complexity zero are called {\em toric}
varieties and they admit a well-known combinatorial description in
terms of fans of polyhedral cones~\cites{KKMS73,CLS}. Such combinatorial
description can be generalized to higher complexity. The complexity
one case was considered in ~\cites{KKMS73,FZ,Tim08}. Altmann,
Hausen and S{\"u}\ss\ generalize this combinatorial description for
arbitrary complexity using the language of polyhedral divisors and
divisorial fans on normal projective algebraic varieties~\cites{AH05,
  AHH}. We will use the language of polyhedral divisors in this paper. In this context, an
affine $\TT$-variety $X$ is determined by a pair $(Y,\mathcal{D})$,
where $Y$ is the Chow quotient for the $\TT$-action on $X$ and
$\mathcal{D}$ is a p-divisor on $Y$, i.e., a formal finite sum of
prime divisors on $Y$ whose coefficients are convex polyhedra. Recall
that the Chow quotient of $X$ by the action of $\mathbb{T}$ is the
closure of the set of general $\mathbb{T}$-orbit closures seen as
points in the Chow variety (see Section~\ref{sec1} for the details).

Using toric resolution of singularities is it possible to prove that
every normal toric variety $X$ has log terminal singularities
~\cite{CLS}*{Proposition 11.4.24}.  Therefore, normal toric varieties
have rational singularities and then are Cohen-Macaulay and Du Bois, 
see~\cite{KM98}. However, in higher complexity none of this
statements is true if we do not impose conditions on the defining
combinatorial data. For instance, a simple computation shows that the
affine cone $X_C$ over a plane curve $C$ of genus $g$ is a normal
$\TT$-surface of complexity one which is log canonical if and only if
$g$ equals $0$ or $1$. Furthermore, in the case that $C$ is a curve of genus $g\geq 1$, the
singularity at the vertex is not rational~\cite{LS} and so by virtue
of~\cite{KK10}, $X$ is an example of a $\TT$-variety of complexity one
which is Cohen-Macaulay and Du Bois but has a singularity that is not
rational.

Therefore it is worthwhile to study characterizations of this kind of
singularities for $\TT$-varieties of higher complexity in terms of the
defining p-divisor. There are such characterizations of rational
singularities and $\qq$-Gorenstein singularities and partial
characterizations of Cohen-Macaulayness in terms of the defining
p-divisors (see~\cite{LS}). In the case of positive characteristic,
there are also characterizations of $F$-split and $F$-regular
$\mathbb{T}$-varieties of complexity one in terms of p-divisors (see~\cite{AIS}).

Cohen-Macaulay singularities are the most natural singularities which
have the same cohomological behavior as smooth varieties, for
instance vanishing theorems still hold for varieties with
Cohen-Macaulay singularities (see~\cite{KM98}).  Du Bois singularities
also play an important role in algebraic geometry since these are the
singularities appearing in the minimal model program and moduli
theory (see~\cites{Sch07,SS11}).  In this article, we study
$\mathbb{T}$-varieties with Cohen-Macaulay and Du Bois singularities.

Throughout the introduction, we restrict ourselves to the case of rational $\mathbb{T}$-varieties of complexity one.
We briefly recall the data defining a rational complexity one affine $\mathbb{T}$-variety (we refer the reader to Section~\ref{sec1}
and for the details).
These varieties are determined by a finite set of points
$p_1,\dots,p_r\in \pp^1$
and a finite set of polyhedra
$\Delta_{p_1},\dots,\Delta_{p_r}\subset N_\qq := N \otimes \qq \simeq \qq^r$ with a common recession cone $\sigma\subset N_\qq$. 
Let $M:={\rm Hom}(N,\zz)$.
This data induces a function 
$\mathcal{D} \colon M \rightarrow {\rm CaDiv}_\qq(\pp^1)$
defined by 
\[
\mathcal{D}(u) := \sum_{i=1}^r \min_{v\in \mathcal{V}_i} \langle u, v\rangle p_i
\]
where $\mathcal{V}_i$ is the set of vertices of $\Delta_{p_i}$.
We say that $\mathcal{D}$ is a {\em p-divisor over $\pp^1$} if 
$\sum_{i=1}^r \Delta_{p_i}\subset \sigma$;
this is a particular case of Definition~\ref{def:p-div}.
There exists a rational $\mathbb{T}$-variety of complexity one $X(\mathcal{D})$ associated to the p-divisor $\mathcal{D}$ (see Construction~\ref{con:t-var} and Definition~\ref{def:t-var}).
Furthermore, every rational affine $\mathbb{T}$-variety of complexity one is isomorphic to such a $X(\mathcal{D})$~\cite{AH05}.
We say that a ray $\rho \in \sigma(1)$ is {\em big} if 
$\sum_{i=1}^r \Delta_{p_i} \cap \rho =\emptyset$.
Otherwise, we say that $\rho\in \sigma(1)$ is {\em not big}.
Our first theorem, is an application of Kempf criteria for rational singularities~\cite{Kempf} in the context of rational $\mathbb{T}$-varieties of complexity one.

\begin{introthm}\label{thm:comp-1-rat}
 Let $\mathcal{D}$ be a p-divisor on $(\pp^1, N)$. 
 Assume that $X(\mathcal{D})$ is Cohen-Macaulay.
 Then, $X(\mathcal{D})$ is rational if and only if
 $\deg \lfloor \mathcal{D}(u)\rfloor \geq -1$ for every $u$ in the set \[
 \{ u\in M \mid 
 \langle u, \rho \rangle \leq -1 \text{ for all $\rho$ big and } 
 \langle u,\rho \rangle \geq 0 
 \text{ for al least one $\rho$ not big}
 \}. 
 \]
\end{introthm}

In a similar vein, 
using Kov\'acs-Schwede-Smith characterization
of Du Bois singularities \cite{KSS}*{Theorem 1}, we prove the following
result:

\begin{introthm}\label{thm:comp-1-DB}
Let $\mathcal{D}$ be a p-divisor on 
$(\pp^1,N)$. Assume that $X(\mathcal{D})$ is Cohen-Macaulay.
Then, $X(\mathcal{D})$ has Du Bois singularities
if and only if $\deg\lfloor \mathcal{D}(u)\rfloor \geq -1$ for every $u$ in the set
\[
\{
u\in M\mid 
\langle u,\rho \rangle \leq -1 \text{  for all $\rho$ big and }
\langle u, \rho \rangle \geq 1 \text{ for at least one $\rho$ not big }
\}. 
\]
\end{introthm}

The paper is organized as follows: in Section~\ref{sec1}, we introduce
the basic notation of $\mathbb{T}$-varieties via p-divisors.
In 
Section~\ref{gensing}, we introduce the classes of singularities needed in
the article, i.e., rational, Cohen-Macaulay, Du Bois, and log canonical.  
In Section~\ref{sec2}, we prove some preliminary
results regarding the canonical divisor of $\mathbb{T}$-varieties
and sections of invariant line bundles.
In Section~\ref{sec25} we prove
Theorem~\ref{thm:rat} 
and Theorem~\ref{thm:DB} 
which are generalizations of 
Theorem~\ref{thm:comp-1-rat}
and Theorem~\ref{thm:comp-1-DB}
to higher complexity.

\subsection*{Acknowledgements}
We would like to thank Karl Schwede and Linquan Ma for useful comments
about Du Bois singularities.
We wish to thank the
anonymous referees of an early version of this work for useful comments and suggestions that helped the authors improve and correct the results.

\section{$\mathbb{T}$-Varieties via polyhedral divisors}\label{sec1}

We work over an algebraically closed field $\kk$ of characteristic
zero.  In this section, we briefly introduce the language of
p-divisors introduced in ~\cite{AH05} and ~\cite{AHH}.  We begin by
recalling the definition of polyhedral divisors, p-divisors and their
connection with affine $\mathbb{T}$-varieties.

Let $N$ be a finitely generated free abelian group of rank $k$ and
denote by $M$ its dual. We also let $N_\qq$ and $M_\qq$ be the
corresponding $\qq$-vector spaces obtained by tensoring $N$ and $M$
with $\qq$ over $\mathbb{Z}$, respectively.  We denote by
$\mathbb{T} : = \Spec(\kk [M] )$ the $k$-dimensional algebraic torus.
Given a polyhedron $\Delta \subset N_\qq$ we will denote its {\em
  recession cone} by
\[
\rec(\Delta) :=\{ v\in N_\qq \mid v + \Delta \subset \Delta\},
\]
where $+$ denotes the Minkowski sum, 
which is defined by 
\[ 
\Delta_1 + \Delta_2 := \{ w \in N_\qq \mid 
w=v_1+v_2, v_1\in \Delta_1 \text{ and } v_2 \in \Delta_2\}.
\] 
Given a pointed polyhedral cone $\sigma\subset N_\qq$ 
we can define a semigroup with underlying set 
\[
\Pol_\qq(N,\sigma) :=\{ \Delta \subset N_\qq \mid \text{ $\Delta$ is a
polyhedron with $\rec(\Delta)=\sigma$}\}
\]
and addition being the Minkowski sum. The neutral element in the
semigroup is the cone $\sigma$. The elements in $\Pol_\qq(N,\sigma)$
are called $\sigma$-polyhedra.  In what follows we consider the
semigroup $\Pol^+_\qq(N,\sigma)$ which is the extension of the above
semigroup obtained by including the element $\emptyset$ which is an
absorbing element with respect to the addition, meaning that:
\[
  \emptyset + \Delta :=\emptyset \text{ for all
    $\Delta \in \Pol^+_\qq(N,\sigma)$}.
\]
Given a normal projective variety $Y$ we denote by
$\CaDiv_{\geq 0}(Y)$ the semigroup of effective Cartier divisors on
$Y$. We define a {\em polyhedral divisor on $(Y,N)$ with recession
  cone $\sigma$} to be an element of the semigroup
\[
\Pol_\qq^+(N,\sigma) \otimes_{\zz_{\geq 0}} \CaDiv_{\geq 0}(Y).
\]
Observe that any polyhedral divisor can be written as 
\[
\mathcal{D}= \sum_{Z} \Delta_Z \cdot Z,
\]
where the sum runs over prime divisors of $Y$, the coefficients
$\Delta_Z$ are either $\sigma$-polyhedra or the empty set and all but
finitely many $\Delta_Z$ are the neutral element $\sigma$. The
recession cone of a polyhedral divisor $\mathcal{D}$ is defined as the
recession cone of any non-empty coefficient and is denoted by
$\sigma(\mathcal{D})$.  The {\em locus of $\mathcal{D}$} is the open
set
\[
\loc(\mathcal{D}) := Y \setminus \bigcup_{\Delta_Z=\emptyset} Z
\] 
and we say that $\mathcal{D}$ has {\em complete locus} if the equality
$\loc(\mathcal{D})=Y$ holds, meaning that all the coefficients
$\Delta_Z$ are nonempty $\sigma(\mathcal{D})$-polyhedra.  A polyhedral
divisor $\mathcal{D}$ on $(Y,N)$ with recession cone $\sigma$, defines
a homomorphism of semigroups as follows
\[
\mathcal{D} \colon \sigma^\vee \rightarrow 
\CaDiv_\qq ( \loc(\mathcal{D})) 
\]
\[
u\mapsto \sum_{\Delta_Z \neq \emptyset} 
\min \langle \Delta_Z, u \rangle Z|_{\loc(\mathcal{D})},
\]
where by abuse of notation we denote by $\mathcal{D}$ both the
polyhedral divisor and the homomorphism of semigroups.  Observe that
this homomorphism is well-defined since all the polyhedra $\Delta_Z$
have recession cone $\sigma(\mathcal{D})$ and then the minimum
appearing in the definition always exists.  Moreover, for any
$u\in \sigma^\vee$ we have that $\mathcal{D}(u)$ is a $\qq$-divisor in
$\loc(\mathcal{D})$ whose support contained in
\[
\supp(\mathcal{D}) := 
\loc(\mathcal{D}) \cap \bigcup_{\Delta_Z \neq \emptyset} Z. 
\]

Recall that a variety $Y$ is called semiprojective if the natural map
$Y\rightarrow H^0(Y,\OO_Y)$ is projective. In particular, affine and
projective varieties are semiprojective. Furthermore, blow-ups of
semiprojective varieties are also semiprojective.

\begin{definition}\label{def:p-div}
  Let $\mathcal{D}$ be a polyhedral divisor on $(Y,N)$ 
  with recession
  cone $\sigma$.  We say that $\mathcal{D}$ is a {\em proper
    polyhedral divisor}, 
    or {\em p-divisor} for short, if
  $\loc(\mathcal{D})$ is semiprojective, $\mathcal{D}(u)$ is semiample
  for $u \in \sigma^\vee$ and $\mathcal{D}(u)$ is big for
  $u\in \relint(\sigma^\vee)$.  Note that this condition holds
  whenever the locus of the polyhedral divisor is affine.
\end{definition}

\begin{remark}
In what follows, given a $\qq$-divisor $D$ on an algebraic variety $X$
we will write $\mathcal{O}_X(D)$ for $\mathcal{O}_X(\lfloor D \rfloor)$.
\end{remark}

\begin{construction}\label{con:t-var}
Given a p-divisor $\mathcal{D}$ on $(Y,N)$ 
with recession cone $\sigma$ 
we have an induced sheaf of $\mathcal{O}_{\loc(\mathcal{D})}$-algebras
\[
\mathcal{A}(\mathcal{D}) := \bigoplus_{u \in \sigma^\vee \cap M} 
\mathcal{O}_{\loc(\mathcal{D})}(\mathcal{D}(u))\chi^u.
\]
Regard that $\mathcal{A}(\mathcal{D})$ is indeed a 
sheaf of $\mathcal{O}_{\loc(\mathcal{D})}$-algebras 
since the inequality
\[
\mathcal{D}(u)+\mathcal{D}(u')\leq \mathcal{D}(u+u')
\]
holds for every $u,u'\in \sigma^\vee$ by convexity of the polyhedral
coefficient.  We denote by $\widetilde{X}(\mathcal{D})$ the relative
spectrum of $\mathcal{A}(\mathcal{D})$ and by $X(\mathcal{D})$ the
spectrum of the ring of global sections, both $X(\mathcal{D})$ and
$\widetilde{X}(\mathcal{D})$ comes with a $\mathbb{T}$-action induced
by the $M$-grading.  The main theorem of ~\cite{AH05} states that
every $n$-dimensional normal affine variety $X$ with an effective
action of a $k$-dimensional torus $\mathbb{T}$ is
$\mathbb{T}$-equivariantly isomorphic to $X(\mathcal{D})$ for some
$p$-divisor $\mathcal{D}$ on $(Y,N)$, where $Y$ is a projective normal
variety of dimension $n-k$ and $\rank(N)=k$.  In this situation we
have two natural morphisms
\[
  \xymatrix@C=20pt{ & \loc(\mathcal{D}) &
    \widetilde{X}(\mathcal{D})\ar[r]^-{r}
    \ar[l]_-{\pi} & X(\mathcal{D}), \\
  }
\]
where $\pi$ is the good quotient induced by the inclusion of sheaves
$\mathcal{O}_{\loc(\mathcal{D})} \hookrightarrow
\mathcal{A}(\mathcal{D})$ and $r$ is the $\mathbb{T}$-equivariant
birational map induced by taking global sections of
$\mathcal{A}(\mathcal{D})$. Moreover, every affine $\TT$-variety can be
recovered from a p-divisor on the Chow quotient by the $\TT$-action (see \cite[Definition~8.7]{AH05}).
\end{construction} 

\begin{definition}\label{def:t-var} 
Let $\mathcal{D}$ be a p-divisor on $(Y,N)$.
  The affine $\mathbb{T}$-variety $X(\mathcal{D})$ is called 
  the $\mathbb{T}$-variety {\em associated to the p-divisor $\mathcal{D}$}.
\end{definition}

Let $\mathcal{D}$ be a p-divisor. A ray $\rho \in \sigma(\mathcal{D})$ is called  big if $\mathcal{D}(u)$ is big for $u$ in the relative interior of the cone
$\sigma(\mathcal{D})^\vee \cap \rho^\perp$.  A vertex $v$ in $\Delta_Z$ is called big if $\mathcal{D}(u)|_Z$ is big for every
$u$ in the relative interior of the cone
$\left\{u \mid\langle u, w -v \rangle > 0 \mbox{ for all } w \in \Delta_Z \right\}$.

\section{Singularities of normal varieties}\label{gensing}

In this section, we define the main classes of singularities that we
will study and we recall the inclusions between these classes.  We
will proceed by introducing the smaller classes of singularities first
to then move to the wider ones.

\begin{definition}
Let $X$ be a $\qq$-Gorenstein normal algebraic variety 
and $\phi \colon Y \rightarrow X$ be a log resolution, 
i.e. a resolution such that the exceptional locus with
reduced scheme structure is purely divisorial and simple normal crossing.
Then we can write
\[ K_Y = \phi^*(K_X) + \sum_{i} a_iE_i,\]
where $E_i$ are pairwise different exceptional divisors over $X$. 
We say that $X$ is {\em log terminal}
(resp. {\em log canonical}) if $a_i>-1$ (resp. $a_i\geq -1$) for every
$i\in I$. 
\end{definition}

\begin{definition}
  Let $X$ be a normal algebraic variety and let
  $\phi \colon Y \rightarrow X$ be any resolution of singularities of
  $X$.  We say that $X$ has {\em rational singularities} if the higher
  direct image sheaves $R^{i}\phi_{*}\mathcal{O}_Y$ vanish for all
  $i>0$.
\end{definition}

Log terminal singularities are rational ~\cite[Theorem 5.22]{KM98} but
in general log canonical singularities are not rational.  Indeed, let
$X$ to be the affine cone over a planar elliptic curve $C$ in
$\mathbb{P}^2$. We can produce a log resolution
$\phi\colon Y \rightarrow X$ with only one exceptional divisor $E$ such
that $K_Y = \phi^*(K_X)-E$, so $X$ is log canonical but the stalk of
$R^1\phi_*\mathcal{O}_X$ at the vertex of the cone is isomorphic to
$H^1(C,\mathcal{O}_C)\simeq \kk$ which is non trivial and so $X$ is
not rational.

\begin{definition}
  We say that a commutative local Noetherian ring $R$ is {\em
    Cohen-Macaulay} if its depth is equal to its dimension.  An
  algebraic variety $X$ is {\em Cohen-Macaulay} if the local
  ring $\mathcal{O}_{X,x}$ is  Cohen-Macaulay for all $x\in X$.
\end{definition}

It is known that rational singularities in characteristic zero are
Cohen-Macaulay (see, e.g., ~\cite[Theorem 5.10]{KM98}), however there are many examples of log canonical
singularities which are not Cohen-Macaulay.

We will use the following characterization of Du Bois singularities
which are Cohen-Macaulay due to K\'{o}vacs-Schwede-Smith. \\

\begin{theorem}[{\cite[Theorem 1]{KSS}}]\label{caractdubois}
  Let $X$ be a normal Cohen-Macaulay algebraic variety, let
  $\phi \colon Y\rightarrow X$ be a log resolution and denote by $E$
  the exceptional divisor of $\phi$ with reduced scheme structure.
  Then $X$ has Du Bois singularities if and only if we have an
  isomorphism
  of sheaves $\phi_* \omega_Y(E)\simeq \omega_X$.
\end{theorem}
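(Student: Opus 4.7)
The plan is to derive the equivalence by dualizing the defining condition of Du Bois singularities and invoking Grothendieck duality for the log resolution $\phi$.

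First I would recall that by definition $X$ is Du Bois precisely when the canonical morphism $\OO_X \to \underline{\Omega}_X^0$ into the zeroth graded piece of Du Bois's complex is a quasi-isomorphism in the derived category of $\OO_X$-modules. The main Hodge-theoretic input I would import is the quasi-isomorphism $\underline{\Omega}_X^0 \simeq R\phi_* \OO_Y(-E)$, which arises from Du Bois's construction applied to the square $E \hookrightarrow Y$ above $\phi(E) \hookrightarrow X$ and relies on the cubical hyperresolution machinery. In these terms the Du Bois condition becomes the derived equivalence $\OO_X \simeq R\phi_* \OO_Y(-E)$.

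Next I would apply relative Grothendieck duality along the proper birational morphism $\phi$. Since $Y$ is smooth of dimension $n = \dim X$, one has $\phi^! \omega_X^\bullet \simeq \omega_Y[n]$, hence
\[
R\mathcal{H}om_{\OO_X}\!\bigl(R\phi_* \OO_Y(-E),\, \omega_X^\bullet\bigr) \;\simeq\; R\phi_* \omega_Y(E)[n].
\]
The Cohen--Macaulay hypothesis forces $\omega_X^\bullet \simeq \omega_X[n]$ to be concentrated in a single degree, so dualizing the equivalence $\OO_X \simeq R\phi_*\OO_Y(-E)$ yields the equivalent derived statement $R\phi_* \omega_Y(E) \simeq \omega_X$.

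Finally, I would argue that under the Cohen--Macaulay hypothesis this derived isomorphism collapses to the sheaf-level isomorphism $\phi_*\omega_Y(E) \simeq \omega_X$ of the theorem. One direction is immediate after passage to $H^0$; for the converse the required vanishings $R^i\phi_*\omega_Y(E) = 0$ for $i > 0$ would come from the adjunction sequence $0 \to \omega_Y \to \omega_Y(E) \to \omega_E \to 0$ combined with Grauert--Riemenschneider vanishing for $\omega_Y$ and an inductive control of $R^i\phi_*\omega_E$ along the components of the snc exceptional divisor $E$. The main obstacle is the first step: establishing the quasi-isomorphism $\underline{\Omega}_X^0 \simeq R\phi_*\OO_Y(-E)$ requires the full machinery of Du Bois's filtered complex and the verification that the chosen log resolution actually computes $\underline{\Omega}_X^0$; once that identification is in hand, the rest is a formal duality argument together with a standard vanishing.
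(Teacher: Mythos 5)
The paper itself offers no proof of this statement: it is imported verbatim as \cite[Theorem 1]{KSS} and used as a black box, so the only thing to assess is whether your argument would actually establish it. It would not, because its first and most important step is false. The quasi-isomorphism $\underline{\Omega}_X^0 \simeq R\phi_*\mathcal{O}_Y(-E)$ does not hold for a log resolution $\phi\colon Y\to X$. What the hyperresolution/blow-up machinery actually produces is an exact triangle
\[
R\phi_*\mathcal{O}_Y(-E) \longrightarrow \underline{\Omega}_X^0 \longrightarrow \underline{\Omega}_\Sigma^0 \xrightarrow{\ +1\ },
\]
where $\Sigma=\phi(E)$ with its reduced structure, and the third term is not zero. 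Already for $X$ smooth and $\phi$ the blow-up of a point $p$ one has $\underline{\Omega}_X^0=\mathcal{O}_X$ while $R\phi_*\mathcal{O}_Y(-E)=\mathfrak{m}_p$, so recasting the Du Bois condition as $\mathcal{O}_X\simeq R\phi_*\mathcal{O}_Y(-E)$ makes it strictly stronger and it fails even for smooth varieties. (Schwede's criterion, which may be what you are recalling, computes $\underline{\Omega}_X^0$ as $R\pi_*\mathcal{O}_{\bar X}$ for a log resolution of a pair $(A,X)$ with $A$ a smooth \emph{ambient} variety and $\bar X$ the reduced total transform of $X$ --- a different object from $\mathcal{O}_Y(-E)$ on a resolution of $X$ itself.)

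The error propagates to the end of the argument: the derived isomorphism $R\phi_*\omega_Y(E)\simeq\omega_X$ and the vanishings $R^i\phi_*\omega_Y(E)=0$ for $i>0$ that you propose to prove via Grauert--Riemenschneider and induction along $E$ are also false in general. For the blow-up of a smooth surface point, the sequence $0\to\omega_Y\to\omega_Y(E)\to\omega_E\to 0$ together with Grauert--Riemenschneider gives $R^1\phi_*\omega_Y(E)\simeq H^1(E,\omega_E)\simeq\kk\neq 0$, even though $X$ is smooth, hence Cohen--Macaulay and Du Bois, and $\phi_*\omega_Y(E)\simeq\omega_X$ as the theorem asserts. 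The actual Kov\'acs--Schwede--Smith proof never claims an isomorphism of full derived pushforwards: its key (and non-formal) lemma identifies only the bottom cohomology $h^{-n}\bigl(R\mathcal{H}om(\underline{\Omega}_X^0,\omega_X^\bullet)\bigr)$ with $\phi_*\omega_Y(E)$, absorbing the correction term $\underline{\Omega}_\Sigma^0$, and then uses the Cohen--Macaulay hypothesis through local duality to show that the Du Bois condition is detected by that single graded piece. So the overall shape of your plan (dualize, exploit Cohen--Macaulayness) is the right one, but it has to be run degree by degree via local duality rather than as a derived-category equivalence, and the required input is the identification of $\phi_*\omega_Y(E)$ with the dual of $\underline{\Omega}_X^0$ in lowest degree, not the identification of $\underline{\Omega}_X^0$ with $R\phi_*\mathcal{O}_Y(-E)$.
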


There is an analogous characterization of rational singularities in
terms of canonical sheaves: a Cohen-Macaulay normal algebraic variety
$X$ has rational singularities if and only if
the natural inclusion of sheaves $\phi_*\omega_Y \hookrightarrow \omega_X$ is an isomorphism, 
where $\phi \colon Y \rightarrow X$ is
a resolution of singularities \cite{KKMS73}. From
Theorem~\ref{caractdubois} and this characterization of rational
singularities it follows that rational singularities are Du Bois.
Indeed, by ~\cite[Lemma 3.14]{KSS} we have two natural injections
$\phi_*\omega_Y \hookrightarrow \phi_* \omega_Y(E) \hookrightarrow \omega_X$,
and Theorem~\ref{caractdubois} states that $X$ is Du Bois whenever the inclusion 
of sheaves $\phi_*\omega_Y(E)\hookrightarrow \omega_X$ is an isomorphism.
Moreover, it is known that log canonical singularities are Du
Bois~\cite{KK10}.

  It is known that toric singularities are log terminal
  ~\cite[Proposition 11.4.24]{CLS} and therefore they belong to all
  the above classes of singularities.  In higher complexity the
  situation becomes more complicated: rational singularities can be
  characterized in terms of divisorial fans ~\cite[Theorem 3.4]{LS},
  but there are not complete characterizations for the other classes
  of singularities defined above. Partial characterizations of
  Cohen-Macaulay and log terminal singularities with torus action are
  given in ~\cite{LS}.

\begin{remark}
The above definitions are independent of the chosen resolution
(or log resolution) of the normal variety $X$. 
By ~\cite{AW97} we may assume that the resolution of singularities is
$\mathbb{T}$-equivariant.
\end{remark}

\section{Preliminary results}\label{sec2}

In this section, we collect all the ingredients we need for the proof
of our main theorem. Some are borrowed from the literature on
$\TT$-varieties while some other are proven here. 

\subsection{$\mathbb{T}$-invariant divisors}\label{ss1}

We recall the description of the $\mathbb{T}$-invariant prime divisors
of the $\mathbb{T}$-varieties $\widetilde{X}(\mathcal{D})$ and
$X(\mathcal{D})$ following ~\cite[Proposition 3.13]{PS}.  Let
$\mathcal{D}$ be a p-divisor on $(Y,N)$, where $Y$ is a normal
algebraic variety, then any fiber of the good quotient $\pi$ over a
point of $Y$ which is not contained in $\supp(\mathcal{D})$ is
$\mathbb{T}$-equivariantly isomorphic to the toric variety
$X(\sigma(\mathcal{D}))$, therefore the $\mathbb{T}$-variety
$\widetilde{X}(\mathcal{D})$ admits an open subset which is isomorphic
to the product
$X(\sigma(\mathcal{D}))\times (Y - \supp(\mathcal{D}))$.

An {\em horizontal $\mathbb{T}$-divisor} is a $\mathbb{T}$-invariant
divisor of $\widetilde{X}(\mathcal{D})$ which dominates $Y$, these
divisors are in one to one correspondence with the rays $\rho$ of the
cone $\sigma(\mathcal{D})$ and they can be realized as the closure in
$\widetilde{X}(\mathcal{D})$ of the subvariety
$V(\rho)\times (Y-\supp(\mathcal{D}))$, where $V(\rho)$ is the toric
invariant divisor of $X(\sigma(\mathcal{D}))$ corresponding to the ray
$\rho$ of $\sigma(\mathcal D)$. On the other hand, we define a {\em vertical
  $\mathbb{T}$-divisor} to be a $\mathbb{T}$-invariant divisor of
$\widetilde{X}(\mathcal{D})$ which does not dominate $Y$, these
divisors are in one-to-one correspondence with pairs $(Z,v)$ where $Z$
is a prime divisor of $Y$ and $v$ is a vertex of the polyhedron
$\Delta_Z$.  We often write $\mathcal{V}(\Delta_Z)$ for the set of
vertices of the polyhedron $\Delta_Z$.  Any $\mathbb{T}$-invariant
divisor of $\widetilde{X}(\mathcal{D})$ is either horizontal or
vertical.

\begin{definition}
\label{defbig}
Now we turn to describe the horizontal and vertical
$\mathbb{T}$-divisors which are not contained in the exceptional locus
of the morphism
$r\colon \widetilde{X}(\mathcal{D})\rightarrow X(\mathcal{D})$.  
\begin{itemize}
\item
A ray $\rho$ of $\sigma(\mathcal{D})$ is {\em big} if
the $\qq$-divisor $\mathcal{D}(u)$ is big for every
$u\in \relint( \sigma^\vee \cap \rho^{\perp})$, the corresponding
horizontal $\mathbb{T}$-divisor will be called {\em big} as well.
\item
Given a prime divisor
 $Z\subset Y$ and a vertex $v\in \Delta_Z$ we say
that the vertex $v$ is a {\em big} if the $\qq$-divisor
$\mathcal{D}(u)|_Z$ is a big divisor for every $u$ in the interior of
the normal cone
\[
\mathcal{N}(\Delta_Z, v):=
\{
u \mid \langle u, w-v \rangle >0 
\text{ for every $w\in \Delta_Z$}
\}.
\]
As before, we say that the corresponding vertical $\mathbb{T}$-divisor
is big.  
\end{itemize}
Then we can say that the codimension one exceptional set of
$r$ is the union of all the $\mathbb{T}$-divisors which are not big.
\end{definition}

Finally we turn to introduce some notation for the horizontal and
vertical $\mathbb{T}$-divisors on $\widetilde{X}(\mathcal{D})$ and
$X(\mathcal{D})$.  Given a ray $\rho$ in $\sigma(\mathcal{D})$ we may
denote by $\widetilde{D}_\rho$ the corresponding horizontal divisor of
$\widetilde{X}(\mathcal{D})$ and whenever $\rho$ is big we denote by
$D_\rho$ its image $r(\widetilde{D}_\rho)$.  Given a prime divisor
$Z\subset Y$ and a vertex $v\in \mathcal{V}(\Delta_Z)$ we write
$\widetilde{D}_{(Z,v)}$ for the corresponding vertical divisor and
whenever $v$ is big we denote by $D_{(Z,v)}$ its image in
$X(\mathcal{D})$.

\subsection{$\mathbb{T}$-orbit decomposition}\label{ss2}

Now we turn to describe the $\mathbb{T}$-orbits of the
$\mathbb{T}$-varieties $\widetilde{X}(\mathcal{D})$ and
$X(\mathcal{D})$ following ~\cite[Section 10]{AH05}.  In order to do
so, we begin by defining the toric bouquet associated to a polytope
$\Delta\subset N_\qq$.

\begin{definition}
  Given a $\sigma$-polyhedron $\Delta \subset N_\qq$ we denote by
  $\mathcal{N}(\Delta)$ the {\em normal fan} of $\Delta$, which is the
  fan in $M_{\mathbb{Q}}$ with support $\sigma^\vee$ and whose cones
  corresponds to linearity regions of the function
  $\min\langle \Delta, - \rangle \colon \sigma^\vee \rightarrow \qq$.
  Observe that the cones of $\mathcal{N}(\Delta)$ are in one-to-one
  dimension-reversing correspondence with the faces of $\Delta$.
  Given a face $F$ of $\Delta$, we will denote by $\mathcal{N}(F)$ the
  corresponding cone of $\mathcal{N}(\Delta)$.  We define the {\em
    toric bouquet} of $\Delta$ to be
\[
X(\Delta):=\Spec( \kk [ \mathcal{N}(\Delta) \cap M]),
\]
where $\kk [ \mathcal{N}(\Delta) \cap M]$ equals
$\kk[\sigma^\vee \cap M]$ as $\kk$-vector spaces, but the
multiplication rule in $\kk[\mathcal{N}(\Delta)\cap M]$ is given by
\[
\chi^u \cdot \chi^{u'}:=
\left\{
	\begin{array}{cl}
		\chi^{u+u'}  & \text{if u,u' belong to a common cone of $\mathcal{N}$}, \\
		0 &  \text{otherwise}.
	\end{array}
\right. 
\]
\end{definition}

Given a point $y\in Y$, we define the {\em fiber polyhedron} to be
\[
\Delta_y :=
\left\{
	\begin{array}{cl}
		\sum_{y\in Z} \Delta_Z  & \text{if $y\in \supp(\mathcal{D})$}, \\
		\sigma(\mathcal{D}) &  \text{otherwise}.
	\end{array}
\right. 
\]
In the previous section we stated that the fiber $\pi^{-1}(y)$ over a
point $y\in Y$ which is not contained in the support of $\mathcal{D}$
is $\mathbb{T}$-equivariantly isomorphic to the toric variety
$X(\sigma(\mathcal{D}))$, in general the special fibers of $\pi$ are
not irreducible, but they can still be interpreted as toric bouquets
as follows.  Given a point $y\in \supp(\mathcal{D})$, the fiber
$\pi^{-1}(y)$ is $\mathbb{T}$-equivariantly isomorphic to the toric
bouquet $X(\Delta_y)$.  Therefore, we have a dimension-reversing
bijection between between the orbits of $\pi^{-1}(y)$ and the faces of
$\Delta_y$.  Thus, the orbits of $\widetilde{X}(\mathcal{D})$ are in
one-to-one dimension-reversing correspondence with the pairs $(y,F)$
where $y\in Y$ and $F$ is a face of $\Delta_y$.

Finally, we describe the $\mathbb{T}$-orbits
of $\widetilde{X}(\mathcal{D})$ which are identified by the contraction
morphism $r$.
Given an element $u\in \sigma(\mathcal{D})^\vee\cap M$ we have an
induced morphism
\[
\phi_u \colon Y \rightarrow Y_u, 
\text{ where }
Y_u=\Proj \left( \bigoplus_{n\in \zz_{\geq 0}} 
H^0(Y, \mathcal{O}_Y(\mathcal{D}(nu)))\right).
\]
From ~\cite[Theorem 10.1]{AH05}
we know that two orbits of $\widetilde{X}(\mathcal{D})$
corresponding to the pairs $(y,F)$ and $(y',F')$
are identified via the $\mathbb{T}$-equivariant contraction $r$
if and only if 
\[
\mathcal{N}(F)
=\mathcal{N}(F') \text{ and }
\phi_u(y)=\phi_u(y') \text{ for some $u\in \relint(\mathcal{N}(F))$}.
\]
Observe that in the case that $Y$ is $\pp^1$, then either $\phi_u$ is an isomorphism
or is the constant morphism, depending whether the divisor $\mathcal{D}(nu)$ has
positive or trivial degree, respectively.

\subsection{Sections of $\mathbb{T}$-invariant sheaves}\label{ss3}

In this subsection we describe the sections of the sheaf induced by a
$\mathbb{T}$-invariant Weil divisor on $X(\mathcal{D})$ (see, e.g., ~\cite[Section 3.3]{PS}).  Recall that
the prime $\mathbb{T}$-invariant divisors of $X(\mathcal{D})$ are of
the form $D_\rho$ or $D_{Z,v}$ for $\rho$ a big ray of
$\sigma(\mathcal{D})$ or $v\in \mathcal{V}(\Delta_Z)$ a big vertex.
Therefore, any $\mathbb{T}$-invariant Weil divisor $D$ of
$X(\mathcal{D})$ can be written as
\[
D=\sum_{\rho \text{ big }} a_\rho D_\rho
+ \sum_{\substack{(Z,v)\\ \text{ $v$ big }}} 
b_{Z,v}D_{Z,v}.
\]
Observe that the field of rational functions of
$\widetilde{X}(\mathcal{D})$ and the field of rational functions of
$X(\mathcal{D})$ are isomorphic to the fraction field of $\kk(Y)[M]$,
so any quasi-homogeneous rational function of
$\widetilde{X}(\mathcal{D})$ or $X(\mathcal{D})$ can be written as
$f\chi^u$ where $u\in M$ and $f\in \kk(Y)$.  By
~\cite[Proposition 3.14]{PS} we know that the principal divisor
associated to $f\chi^u$ on $\widetilde{X}(\mathcal{D})$ is given by
\begin{equation}\label{div-x}
\divv( f\chi^u)=
\sum_{\rho} \langle \rho, u\rangle D_\rho
+\sum_{(Z,v)} \mu(v)(\langle v, u \rangle +\ord_Z f) D_{Z,v},	
\end{equation} 
and the associated principal divisor on $X(\mathcal{D})$ is given by 
\begin{equation}\label{div-x-tilda}
\divv( f\chi^u)=
\sum_{\substack{\rho \\ \text{ $\rho$ big }}} \langle \rho, u\rangle D_\rho
+\sum_{\substack{(Z,v)\\ \text{ $v$ big }}} \mu(v)(\langle v, u \rangle +\ord_Z f) D_{Z,v},	
\end{equation} 
where $\mu(v)$ denotes the smaller positive integer such that $\mu(v)v$ is an integral point of $N_\qq$.
Therefore, the divisor 
$\divv(f\chi^u) + D$ is effective in 
$X(\mathcal{D})$ if and only if
\begin{eqnarray}
\langle \rho , u \rangle + a_\rho \geq 0, \text{ for every $\rho$ big, and }\nonumber\\
\langle v, u\rangle + \frac{b_{Z,v}}{\mu(v)} +\ord_Z f \geq 0, \text{ for every pair $(Z,v)$ with $v$ big}.\nonumber
\end{eqnarray}
We denote by $\square(D)$ the integer points of the polyhedron defined
by
\[ \{ u \in M_\qq \mid \langle \rho, u \rangle \geq -a_\rho,
\text{ for each $\rho$ big } \}.\]
Given an element $u\in \square(D)\cap M$
we denote by $\psi_D(u)$ the divisor of $Y$ given by
\[
\sum_{Z\subset Y}\left( \min_{\substack{ v\in \mathcal{V}(\Delta_Z)\\ \text{ $v$ big }}}
 \left( \langle v, u \rangle +  \frac{b_{Z,v}}{\mu(v)} \right) \right)Z .
\]
Therefore, we have that
\[
H^0 \left(X(\mathcal{D}),\mathcal{O}_{X(\mathcal{D})}(D)\right)
\simeq \bigoplus_{u\in \square(D)\cap M}
H^0 \left(X(\mathcal{D}),\mathcal{O}_{X(\mathcal{D})}(D)\right)_u,
\]
and 
\[
H^0\left(X(\mathcal{D}),\mathcal{O}_{X(\mathcal{D})}(D)\right)_u 
\simeq H^0(Y, \mathcal{O}_Y(\psi_D(u)))\chi^u.
\]

\subsection{Canonical sheaf}\label{ss4}

Let $K_Y$ be a canonical divisor of $Y$. Given a subvariety
$Z\subset Y$ we denote by $\coef_Z K_Y$ the coefficient of $K_Y$ along
$Z$. Then, we can describe the canonical divisor of
$\widetilde{X}(\mathcal{D})$ as
\begin{equation}\label{eq:canonical-tilde}
K_{\widetilde{X}(\mathcal{D})}= \sum_{(Z,v)} (\mu(v)\coef_Z K_Y +
\mu(v)-1)\widetilde{D}_{Z,v} -\sum_\rho \widetilde{D}_\rho\,,
\end{equation} 
and the canonical divisor of $X(\mathcal{D})$ as
\begin{equation}\label{eq:canonical-aff}
K_{X(\mathcal{D})} = \sum_{\substack{(Z,v)\\ \text{ $v$ big }}}
(\mu(v)\coef_Z K_Y + \mu(v)-1)D_{Z,v} -\sum_{\rho \text{ big }} D_\rho\,.
\end{equation} 

\section{Proof of the main theorems}\label{sec25}

In this section, we prove the main theorems of the article.
We start by stating the theorems for rational singularities
and Du Bois singularities of $\mathbb{T}$-varieties.
In order to state the theorems, 
we will use the following notation on p-divisors:

\begin{notation}\label{not}
Let $\mathcal{D}$ be a p-divisor on $(Y,N)$.
In what follows, when we write $Z\subset Y$, we mean 
a prime divisor on $Y$.
Let $\mathcal{V}_Z$ be the set of vertices of $\Delta_Z$.
Here, $\Delta_Z$ is the polyhedral divisor of $\Delta$ at the prime divisor $Z\subset Y$.
Unless otherwise stated, $\rho$
is the primitive generator of an extremal ray of the cone $\sigma(\mathcal{D})$.

Let $\mathcal{D}$ be a p-divisor on $(Y,N)$.
Then, we have a function 
$\mathcal{D}: \sigma^\vee \rightarrow  \CaDiv_\qq(Y).
$
This function can be extended to the whole group $M$ as follows:
\[
\mathcal{D}(u) := \sum_{Z\subset Y} \min_{v\in \mathcal{V}_Z} \langle u, v\rangle Z.
\]
Note that the previous sum is finite as $\Delta_Z =\rho(\Delta)$ for all but finitely many prime divisors $Z$.
We define $Y^b \subset Y$ to be complement of the union of the prime divisors $Z\subset Y$ for which no $v\in \mathcal{V}_Z$ is big.
Now, we turn to introduce some variations of $\mathcal{D}$.
We define the function
\[
\mathcal{D}_\omega \colon M\rightarrow 
\CaDiv_\qq(Y_v)
\qquad
u\mapsto
\sum_{Z\subset Y} \min_{\underset{\text{$v$ big}}{v\in \mathcal{V}_Z}} \langle v, u\rangle Z.
\] 
Note that the previous 
divisor has support in $Y^b$.
On the other hand, we define:
\[
 \widetilde{\langle v,u\rangle} :=
 \begin{cases}
  \lceil \langle v,u\rangle \rceil & \text{ if $v$ is big}\\
  \lfloor \langle v,u \rangle\rfloor + 1 & \text{ if $v$ is not big}.
  \end{cases}
\]
Then, we can define the function 
\[
\mathcal{D}_{\widetilde{\omega},E} \colon M\rightarrow 
\CaDiv_\qq(Y) 
 \qquad
u\mapsto\sum_{Z\subset Y} \min_{v\in \mathcal{V}_Z} \widetilde{\langle v,u\rangle} Z.
\] 

\end{notation}

\begin{theorem}\label{thm:rat}
Let $\mathcal{D}$ be a p-divisor on $(Y,N)$ with smooth support.
Assume that $X(\mathcal{D})$ is Cohen-Macaulay.
Then $X(\mathcal{D})$ has rational singularities if and only if 
the following conditions hold:
\begin{enumerate} 
\item For every $u\in M$ with 
    $\langle \rho, u \rangle \geq 1$ 
    for all $\rho$, 
    the isomorphism 
    \[
    H^0(Y^b,\omega_{Y^b}(\lceil \mathcal{D}_\omega(u)\rceil)) \simeq 
    H^0(Y,\omega_Y(\lceil \mathcal{D}(u)\rceil)) 
    \] 
    holds, and 
    \item 
    for every $u\in M$ with 
    $\langle \rho, u \rangle \geq 1$ for all $\rho$ big and
    $\langle \rho,u\rangle \leq 0$ for at least one $\rho$ not big, 
    the equality
    \[
    h^0(Y^b,\omega_{Y^b}(\lceil \mathcal{D}_\omega(u)\rceil)) =0
    \] 
    holds.
\end{enumerate}
\end{theorem}

\begin{theorem}\label{thm:DB}
Let $\mathcal{D}$ be a p-divisor on $(Y,N)$ with smooth support.
Assume that $X(\mathcal{D})$ is Cohen-Macaulay.
Then $X(\mathcal{D})$ has Du Bois singularities if and only if 
the following conditions hold:
\begin{enumerate}
    \item For every $u\in M$ with 
    $\langle \rho, u \rangle \geq 1$ for all $\rho$ big and
    $\langle \rho,u\rangle \geq 0$ for all $\rho$ not big, 
    the isomorphism 
    \[
    H^0(Y^b,\omega_{Y^b}(\lceil\mathcal{D}_\omega(u)\rceil)) \simeq 
    H^0(Y,\omega_Y(\mathcal{D}_{\widetilde{\omega},E}(u))) 
    \] 
    holds, and 
    \item 
    for every $u\in M$ with 
    $\langle \rho, u \rangle \geq 1$ for all $\rho$ big and
    $\langle \rho,u\rangle \leq -1$ for at least one $\rho$ not big, 
    the equality
    \[
    h^0(Y^b,\omega_{Y^b}(\lceil \mathcal{D}_\omega(u)\rceil)) =0
    \] 
    holds.
\end{enumerate}
\end{theorem}

\begin{lemma}\label{lem:cont}
Let $\mathcal{D}$ be a
simple normal crossing proper polyhedral divisor on a projective variety $Y$.
We denote by $r\colon \widetilde{X}(\mathcal{D})\rightarrow X(\mathcal{D})$ the associated $\mathbb{T}$-equivariant birational contraction.
Let $E$ be the reduced exceptional divisor of $r$.
Assume $X(\mathcal{D})$ is Cohen-Macaulay.
Then, the following statements hold:
\begin{enumerate}
    \item $X(\mathcal{D})$ has rational singularities
    if and only if
    $r_*\omega_{\widetilde{X}(\mathcal{D})} \simeq \omega_{X(\mathcal{D})}$, and
    \item $X(\mathcal{D})$ has Du Bois singularities if and only if
    $r_*\omega_{\widetilde{X}(\mathcal{D})}(E)\simeq \omega_{X(\mathcal{D})}$.
\end{enumerate}
\end{lemma}

\begin{proof}
The proof of the first statement is analogous to the proof of the second statement. Hence, we only prove the second statement.
For simplicity, we denote 
$X(\mathcal{D})$ by $X$
and $\widetilde{X}(\mathcal{D})$ by $\widetilde{X}$.

Since the polyhedral divisor $\mathcal{D}$ has simple normal crossing support, then the variety
$\widetilde{X}$ has toroidal singularities (see, e.g.,~\cite[Example 2.5]{LS}).
Let $E^+$ be the reduced sum of all the torus invariant vertical divisors mapping to $\supp(\mathcal{D})$ plus the reduced sum of all the torus invariant horizontal divisors.
Again by~\cite[Example 2.5]{LS}, the pair $(\widetilde{X},E^+)$ is toroidal.
In particular, the pair $(\widetilde{X},E^+)$ has log canonical singularities (see, e.g.~\cite[Corollary 11.4.25]{CLS}).
Let $E$ be the reduced exceptional divisor of $\widetilde{X}\rightarrow X$.
Note that $E\leq E^+$.
Let $\phi\colon X'\rightarrow X$ be a log resolution of $\widetilde{X}$
and $\tilde{\phi}\colon X'\rightarrow \tilde{X}$ be the induced projective birational morphism.
Let $E'$ be the reduced exceptional divisor of $X'\rightarrow X$.
By~\cite[Theorem 1]{KSS}, we have that $X$ is Du Bois if and only if 
\[
\phi_*\omega_{X'}(E') \simeq \omega_X.
\] 
Hence, it suffices to prove that
\begin{equation}\label{omega-push} 
\tilde{\phi}_*\omega_{X'}(E') \simeq 
\omega_{\tilde{X}}(E).
\end{equation} 
The isomorphism~\eqref{omega-push} follows from~\cite[Lemma 3.15]{KSS}.
\end{proof}

\begin{proof}[Proof of Theorem~\ref{thm:DB}]
By Lemma~\ref{lem:cont}, 
it suffices to check that 
$r_*\omega_{\widetilde{X}(\mathcal{D})}(E) \simeq \omega_{X(\mathcal{D})}$.
Observe that both sheaves
$r_*\omega_{\widetilde{X}(\mathcal{D})}(E)$
and $\omega_{X(\mathcal{D})}$
are $\mathbb{T}$-invariant 
sheaves on an affine $\mathbb{T}$-variety.
Hence, it suffices to prove that the $M$-graded pieces
of 
\begin{equation}\label{eq:1st} 
H^0(\widetilde{X}(\mathcal{D}),\omega_{\widetilde{X}(\mathcal{D})}(E))
\end{equation} 
agree with the $M$-graded pieces of
\begin{equation}\label{eq:2nd}
H^0(X(\mathcal{D}),\omega_{X(\mathcal{D})}).
\end{equation}

First, we compute the $M$-graded pieces of the group~\eqref{eq:1st}.
Let $f\chi^u \in \kk(Y)[M]$.
By~\eqref{div-x-tilda} and~\eqref{eq:canonical-tilde} , we know that $f\chi^u$ belongs to~\eqref{eq:1st} if and only if the following conditions hold:
\begin{align}
\label{c1}
  \langle \rho, u \rangle \geq 1 & \text{ for all $\rho$ big }  \\
\label{c2}
  \langle \rho, u \rangle \geq 0 & 
  \text{ for all $\rho$ not big} \\
\label{c3}
  {\rm ord}_Z(f) + {\rm coeff}_Z(K_Y)  + \frac{\mu(v)-1+
  \varepsilon(v)}{\mu(v)} + \langle v,u\rangle \geq 0
  &\text{ for all $(Z,v)$},
\end{align}
where $\varepsilon(v) = 0$ if $v$ is big and
$\varepsilon(v) = 1$ otherwise.
Hence, $f\chi^u$ belongs to~\eqref{eq:1st} if and only if
\eqref{c1}, \eqref{c2} are satisfied and~\eqref{c3}
is replaced by 
\[
    {\rm ord}_Z(f)  + {\rm coeff}_Z(K_Y) + 
    \min_{v\in \mathcal{V}_Z}
    \left\{ 
    \frac{\mu(v)+1-\varepsilon(v)}{\mu(v)} + \langle v,u \rangle
    \right\} 
    \geq 0 \quad \text{ for all $Z$}. 
\]
On the other hand
\[
 \left\lfloor \frac{\mu(v)+1-\varepsilon(v)}{\mu(v)} + \langle v,u \rangle\right\rfloor
 =
 \begin{cases}
  \lceil  \langle v,u\rangle \rceil & \text{ if $v$ is big}\\
  \lfloor \langle v,u \rangle \rfloor +1 & \text{ if $v$ is not big}\\
 \end{cases} 
\]
we conclude that 

\[
H^0(\widetilde{X}(\mathcal{D}),\omega_{\widetilde{X}(\mathcal{D})}(E)) \simeq 
\bigoplus_{\underset{\langle \rho, u\rangle \geq 0 \text{ $\forall$ $\rho$ not big}}{\langle \rho,u\rangle \geq 1 \text{ $\forall$ $\rho$ big}}} H^0(Y,\omega_Y(
\mathcal{D}_{\widetilde{\omega},E}(u))).
\]

Here, the function $\mathcal{D}_{\widetilde{\omega},E}$ is defined as in Notation~\ref{not}.
Now, we turn to compute the $M$-graded
pieces of the group~\eqref{eq:2nd}.
By~\eqref{div-x} and~\eqref{eq:canonical-aff},
we know that $f\chi^u$ belongs to 
$H^0(X(\mathcal{D}),\omega_{X(\mathcal{D})})$
if and only if~\eqref{c1} holds and~\eqref{c3}
holds for any $v$ big. The latter condition is 
equivalent to
\[
  {\rm ord}_Z(f)  + 
  {\rm coeff}_Z(K_Y) + 
  \min_{\underset{\text{$v$ big}}{v\in \mathcal{V}_Z}}
  \left\{ 
  \frac{\mu(v)-1}{\mu(v)} + \langle u,v\rangle 
  \right\} \geq 0 \quad \text{ for all $Z$}.
\]
Hence, we conclude that 
\begin{equation*}
    H^0(X(\mathcal{D}),\omega_{X(\mathcal{D})}) \simeq \bigoplus_{\langle \rho, u\rangle \geq 1 \text{ $\forall$ $\rho$ not big}}
    H^0(Y^b,\omega_{Y^b}( \lceil \mathcal{D}_\omega(u))\rceil ).
\end{equation*}
Thus, the affine variety $X(\mathcal{D})$ has Du Bois singularities if and only if the following two conditions are satisfied: 
\begin{itemize}
    \item For every $u\in M$ with 
    $\langle \rho, u \rangle \geq 1$ for all $\rho$ big and
    $\langle \rho,u\rangle \geq 0$ for all $\rho$ not big, 
    the isomorphism 
    \[
    H^0(Y^b,\omega_{Y^b}(\lceil \mathcal{D}_\omega(u)\rceil )) \simeq 
    H^0(Y,\omega_Y(\mathcal{D}_{\widetilde{\omega},E}(u))) 
    \] 
    holds.
    \item 
    For every $u\in M$ with 
    $\langle \rho, u \rangle \geq 1$ for all $\rho$ big and
    $\langle \rho,u\rangle \leq -1$ for at least one $\rho$ not big, 
    the equality
    \[
    h^0(Y^b,\omega_{Y^b}(\lceil \mathcal{D}_\omega(u)\rceil )) =0
    \] 
    holds.
\end{itemize}
This finishes the proof of the theorem.
\end{proof}

\begin{proof}[Proof of Theorem~\ref{thm:rat}]
By Lemma~\ref{lem:cont}, 
it suffices to check that 
$r_*\omega_{\widetilde{X}(\mathcal{D})} \simeq \omega_{X(\mathcal{D})}$.
Observe that both sheaves
$r_*\omega_{\widetilde{X}(\mathcal{D})}$
and $\omega_{X(\mathcal{D})}$
are $\mathbb{T}$-invariant 
sheaves on an affine $\mathbb{T}$-variety.
Hence, it suffices to prove that the $M$-graded pieces
of 
\begin{equation}\label{eq:3rd} 
H^0(\widetilde{X}(\mathcal{D}),\omega_{\widetilde{X}(\mathcal{D})})
\end{equation} 
agree with the $M$-graded pieces of
\[
H^0(X(\mathcal{D}),\omega_{X(\mathcal{D})}).
\]
In view of the proof of Theorem~\ref{thm:DB}, it suffices to find the $M$-graded pieces of~\eqref{eq:3rd}.
By~\eqref{div-x-tilda} and~\eqref{eq:canonical-tilde} , we know that $f\chi^u$ belongs to
$H^0(\widetilde{X}(\mathcal{D}),\omega_{\widetilde{X}(\mathcal{D})}(E))$
if and only if the following conditions hold:
\begin{align} 
\label{c4}
  \langle \rho, u \rangle \geq 1 & \text{ for all $\rho$}  \\
\label{c5}
  {\rm coeff}_Z(K_Y)  + \frac{\mu(v)-1}{\mu(v)} + \langle v,u\rangle + {\rm ord}_Z(f) \geq 0 & \text{ for all $(Z,v)$}.
\end{align}
Hence, $f\chi^u$ belongs to~\eqref{eq:3rd} if and only if
\eqref{c4} holds and~\eqref{c5} is replaced by
\[
  {\rm ord}_Z(f)  + 
  {\rm coeff}_Z(K_Y) + 
  \min_{v\in \mathcal{V}_Z}
  \left\{ 
  \frac{\mu(v)-1}{\mu(v)} + \langle u,v\rangle 
  \right\} \geq 0 \quad \text{ for all $Z$}.
\]
We conclude that 
\begin{equation}\label{eq: }
    H^0(\widetilde{X}(\mathcal{D}),\omega_{\widetilde{X}(\mathcal{D})}) \simeq \bigoplus_{\langle \rho, u\rangle \geq 1 \text{ $\forall \rho$}}
    H^0(Y,\omega_Y(\lceil\mathcal{D}(u)\rceil)).
\end{equation}
Thus, the affine variety $X(\mathcal{D})$ has rational singularities if and only if the following two conditions are satisfied: 
\begin{itemize}
    \item For every $u\in M$ with 
    $\langle \rho, u \rangle \geq 1$ for all $\rho$, 
    the isomorphism 
    \[
    H^0(Y^b,\omega_{Y^b}(\lceil \mathcal{D}_\omega(u)\rceil )) \simeq 
    H^0(Y,\omega_Y(\lceil \mathcal{D}(u)\rceil)) 
    \] 
    holds.
    \item 
    For every $u\in M$ with 
    $\langle \rho, u \rangle \geq 1$ for all $\rho$ big and
    $\langle \rho,u\rangle \leq 0$ for at least one $\rho$ not big, 
    the equality
    \[
    h^0(Y^b,\omega_{Y^b}(\lceil \mathcal{D}_\omega(u)\rceil)) =0
    \] 
    holds.
\end{itemize}
This finishes the proof of the theorem.
\end{proof}

Now, we are ready to prove the theorems in the case of complexity one.

\begin{proof}[Proof of Theorem~\ref{thm:comp-1-rat}]
If $\mathcal{D}$ is a p-divisor on $(\pp^1,N)$, then all the vertices $v\in \mathcal{V}_{p_i}$ are big.
We conclude that
$\pp^{1,b} = \pp^1$ and 
$\mathcal{D}_\omega =\mathcal{D}$.
Hence, the condition Theorem~\ref{thm:rat}.(1) is vacuous.
On the other hand, the condition
of Theorem~\ref{thm:rat}.(2) is equivalent to:
for every $u\in M$ with $\langle \rho, u\rangle \geq 1$ for all $\rho$ big
and $\langle \rho,u\rangle \leq 0$ for at least one $\rho$ not big, we have $\deg \lceil \mathcal{D}(u) \rceil \leq 1$.
This is equivalent to the statement of the theorem after replacing $u$ with $-u$.
\end{proof}

\begin{proof}[Proof of Theorem~\ref{thm:comp-1-DB}]
If $\mathcal{D}$ is a p-divisor on $(\pp^1,N)$, then all the vertices $v\in \mathcal{V}_{p_i}$ are big.
We conclude that
$\pp^{1,b} = \pp^1$ and 
$\mathcal{D}_{\widetilde{\omega},E} = \lceil \mathcal{D}\rceil$.
Hence, the condition Theorem~\ref{thm:DB}.(1) is vacuous.
On the other hand, the condition
of Theorem~\ref{thm:DB}.(2) is equivalent to:
for every $u\in M$ with $\langle \rho, u\rangle \geq 1$ for all $\rho$ big
and $\langle \rho,u\rangle \leq -1$ for at least one $\rho$ not big, we have $\deg \lceil \mathcal{D}(u) \rceil \leq 1$.
This is equivalent to the statement of the theorem after replacing $u$ with $-u$.
\end{proof}

\begin{example} \label{example}
  Let $M=\ZZ^2$ so that $N=\ZZ^2$ and $M_\QQ=N_\QQ=\QQ^2$. We also let
  $\sigma$ be the first quadrant, i.e., the cone spanned by $(1,0)$
  and $(0,1)$. Taking $Y=\mathbb{P}^1$, we let $\DD$ be the p-divisor
  on $Y$ given by
  $\DD=\Delta_1\cdot z_1+\Delta_2\cdot z_2+\Delta_3\cdot
  z_3+\Delta_4\cdot z_4$, where
  $\Delta_1=\Delta_2=\left(\nicefrac{1}{2},1\right)$ and
  $\Delta_3=\Delta_4=\left(-\nicefrac{1}{2},1\right)$. Since
  $\deg \DD=\sum_{i=1}^4 \Delta_i=(0,4)+\sigma\subsetneq \sigma$ 
  it follows that $\DD$ is a p-divisor.

  There are two rays in $\sigma$. We denote the ray spanned by $(1,0)$
  by $\rho_1$ and the ray spanned by $(0,1)$ by $\rho_2$. We have that
  $\rho_1$ is big since it does not intersect $\deg\DD$ while $\rho_2$
  is not big since it intersects $\deg\DD$.

  By \cite[Theorem~5]{Pet} we have that  $X(\DD)$ is a $\ZZ/2\ZZ$ quotient of $X(\DD')$, where
  $\DD'=\Delta'_1\cdot z_1+\Delta'_2\cdot z_2+\Delta'_3\cdot
  z_3+\Delta'_4\cdot z_4$, where
  $\Delta_1=\Delta_2=\left(1,1\right)$ and
  $\Delta_3=\Delta_4=\left(-1,1\right)$ 
  which is a toric variety since $\DD'$ is equivalent to a p-divisor with at most two non-trivial coefficients. This yields $X(\DD')$ is Cohen-Maculay and so is $X(\DD)$  by \cite[Proposition~13]{HE}.  Furthermore, the set
  \begin{align}
    \{ u\in M \mid 
    \langle u, \rho \rangle \leq -1 \text{ for all $\rho$ big and } 
    \langle u,\rho \rangle \geq 0 
    \text{ for at least one $\rho$ not big}
    \}
  \end{align}
  in Theorem~\ref{thm:comp-1-rat} is $\{(u_1,u_2)\in M\mid u_1\leq -1
  \mbox{ and } u_2\geq 0\}$. For instance, $u=(-1,0)$
  in contained in this set and $\deg\lfloor\DD(u)\rfloor=-2$. Theorem~\ref{thm:comp-1-rat} implies that $X(\DD)$ does not have rational singularities. This agrees with \cite[Proposition~5.1]{LS}

  On the other hand, the set
  \begin{align}
    \{ u\in M \mid 
    \langle u, \rho \rangle \leq -1 \text{ for all $\rho$ big and } 
    \langle u,\rho \rangle \geq 1 
    \text{ at least one $\rho$ not big}
    \}
  \end{align}
  in Theorem~\ref{thm:comp-1-DB} is $\{(u_1,u_2)\in M\mid u_1\leq -1
  \mbox{ and } u_2\geq 1\}$. Now, for every $u=(u_1,u_2)$ in this set,
  we have
  $$\deg\lfloor\DD(u)\rfloor=2\left(\left\lfloor\frac{1}{2}u_1\right\rfloor+\left\lfloor-\frac{1}{2}u_1\right\rfloor\right)+4u_2\geq
  -2+4u_2\geq 2\geq -1\,.$$
  We conclude from Theorem~\ref{thm:comp-1-DB} that $X(\DD)$ has Du Bois singularities.
\end{example}
\begin{bibdiv}
\begin{biblist}

\bib{AIPSV}{article}{
   author={Altmann, Klaus},
   author={Ilten, Nathan Owen},
   author={Petersen, Lars},
   author={S\"u\ss , Hendrik},
   author={Vollmert, Robert},
   title={The geometry of $T$-varieties},
   conference={
      title={Contributions to algebraic geometry},
   },
   book={
      series={EMS Ser. Congr. Rep.},
      publisher={Eur. Math. Soc., Z\"urich},
   },
   date={2012},
   pages={17--69},
   review={\MR{2975658}},
}

\bib{AW97}{article}{
   author={Abramovich, Dan},
   author={Wang, Jianhua},
   title={Equivariant resolution of singularities in characteristic $0$},
   journal={Math. Res. Lett.},
   volume={4},
   date={1997},
   number={2-3},
   pages={427--433},
}
  
\bib{AIS}{article}{
   author={Achinger, Piotr},
   author={Ilten, Nathan},
   author={S{\"u}ss, Hendrik},
   title={F-Split and F-Regular Varieties with a Diagonalizable Group Action},
   journal={arXiv:1503.03116},
}

\bib{AH05}{article}{
   author={Altmann, Klaus},
   author={Hausen, J{\"u}rgen},
   title={Polyhedral divisors and algebraic torus actions},
   journal={Math. Ann.},
   volume={334},
   date={2006},
   number={3},
   pages={557--607},
}

\bib{AHH}{article}{
   author={Altmann, Klaus},
   author={Hausen, J{\"u}rgen},
   author={S{\"u}ss, Hendrik},
   title={Gluing affine torus actions via divisorial fans},
   journal={Transform. Groups},
   volume={13},
   date={2008},
   number={2},
   pages={215--242},
}

\bib{AHHL14}{article}{
   author={Arzhantsev, Ivan},
   author={Hausen, J\"urgen},
   author={Herppich, Elaine},
   author={Liendo, Alvaro},
   title={The automorphism group of a variety with torus action of
   complexity one},
   journal={Mosc. Math. J.},
   volume={14},
   date={2014},
   number={3},
   pages={429--471, 641},
}

\bib{BHHN16}{article}{
   author={Bechtold, Benjamin},
   author={Hausen, J\"urgen},
   author={Huggenberger, Elaine},
   author={Nicolussi, Michele},
   title={On terminal Fano 3-folds with 2-torus action},
   journal={Int. Math. Res. Not. IMRN},
   date={2016},
   number={5},
   pages={1563--1602},
}

\bib{Bou}{article}{
    AUTHOR = {Boutot, Jean-Fran\c{c}ois},
     TITLE = {Singularit\'es rationnelles et quotients par les groupes
              r\'eductifs},
   JOURNAL = {Invent. Math.},
  FJOURNAL = {Inventiones Mathematicae},
    VOLUME = {88},
      YEAR = {1987},
    NUMBER = {1},
     PAGES = {65--68},
}

\bib{CLS}{book}{
   author={Cox, David A.},
   author={Little, John B.},
   author={Schenck, Henry K.},
   title={Toric varieties},
   series={Graduate Studies in Mathematics},
   volume={124},
   publisher={American Mathematical Society, Providence, RI},
   date={2011},
   pages={xxiv+841},
}	

\bib{FZ}{article}{
   author={Flenner, Hubert},
   author={Zaidenberg, Mikhail},
   title={Normal affine surfaces with $\Bbb C^\ast$-actions},
   journal={Osaka J. Math.},
   volume={40},
   date={2003},
   number={4},
   pages={981--1009},
}

\bib{HE}{article} {
    AUTHOR = {Hochster, M. and Eagon, John A.},
     TITLE = {Cohen-{M}acaulay rings, invariant theory, and the generic
              perfection of determinantal loci},
   JOURNAL = {Amer. J. Math.},
  FJOURNAL = {American Journal of Mathematics},
    VOLUME = {93},
      YEAR = {1971},
     PAGES = {1020--1058},
}

\bib{KKMS73}{book}{
   author={Kempf, G.},
   author={Knudsen, Finn Faye},
   author={Mumford, D.},
   author={Saint-Donat, B.},
   title={Toroidal embeddings. I},
   series={Lecture Notes in Mathematics, Vol. 339},
   publisher={Springer-Verlag, Berlin-New York},
   date={1973},
   pages={viii+209},
}

\bib{KM98}{book}{
   author={Koll\'ar, J\'anos},
   author={Mori, Shigefumi},
   title={Birational geometry of algebraic varieties},
   series={Cambridge Tracts in Mathematics},
   volume={134},
   note={With the collaboration of C. H. Clemens and A. Corti;
   Translated from the 1998 Japanese original},
   publisher={Cambridge University Press, Cambridge},
   date={1998},
   pages={viii+254},
}

\bib{KK10}{article}{
   author={Koll\'ar, J\'anos},
   author={Kov\'acs, S\'andor J.},
   title={Log canonical singularities are Du Bois},
   journal={J. Amer. Math. Soc.},
   volume={23},
   date={2010},
   number={3},
   pages={791--813},
}

\bib{Kov}{article}{
   author={Kov\'acs, S\'andor J.},
   title={Rational, log canonical, Du Bois singularities: on the conjectures
   of Koll\'ar and Steenbrink},
   journal={Compositio Math.},
   volume={118},
   date={1999},
   number={2},
   pages={123--133},
   issn={0010-437X},
   review={\MR{1713307}},
}

\bib{KSS}{article}{
   author={Kov{\'a}cs, S{\'a}ndor J.},
   author={Schwede, Karl},
   author={Smith, Karen E.},
   title={The canonical sheaf of Du Bois singularities},
   journal={Adv. Math.},
   volume={224},
   date={2010},
   number={4},
   pages={1618--1640},
}

\bib{Lan17}{article}{
   author={Langlois, Kevin},
   title={Singularit\'es canoniques et actions horosph\'eriques},
   language={French, with English and French summaries},
   journal={C. R. Math. Acad. Sci. Paris},
   volume={355},
   date={2017},
   number={4},
   pages={365--369},
   issn={1631-073X},
   review={\MR{3634672}},
   doi={10.1016/j.crma.2017.03.004},
}
	
\bib{LL16}{article}{
   author={Langlois, Kevin},
   author={Liendo, Alvaro},
   title={Additive group actions on affine $\Bbb{T}$-varieties of complexity
   one in arbitrary characteristic},
   journal={J. Algebra},
   volume={449},
   date={2016},
   pages={730--773},
}

\bib{LS}{article}{
   author={Liendo, Alvaro},
   author={S{\"u}ss, Hendrik},
   title={Normal singularities with torus actions},
   journal={Tohoku Math. J. (2)},
   volume={65},
   date={2013},
   number={1},
   pages={105--130},
}

\bib{PS}{article}{
   author={Petersen, Lars},
   author={S{\"u}ss, Hendrik},
   title={Torus invariant divisors},
   journal={Israel J. Math.},
   volume={182},
   date={2011},
   pages={481--504},
}

\bib{Pet}{article} {
    AUTHOR = {Petitjean, Charlie},
     TITLE = {Cyclic covers of affine {$\Bbb{T}$}-varieties},
   JOURNAL = {J. Pure Appl. Algebra},
  FJOURNAL = {Journal of Pure and Applied Algebra},
    VOLUME = {219},
      YEAR = {2015},
    NUMBER = {9},
     PAGES = {4265--4277},
}

\bib{Kempf}{book}{
   author={Kempf, G.},
   author={Knudsen, Finn Faye},
   author={Mumford, D.},
   author={Saint-Donat, B.},
   title={Toroidal embeddings. I},
   series={Lecture Notes in Mathematics, Vol. 339},
   publisher={Springer-Verlag, Berlin-New York},
   date={1973},
   pages={viii+209},
   review={\MR{0335518}},
}
\bib{SS11}{article}{
   author={Kov\'acs, S\'andor J.},
   author={Schwede, Karl E.},
   title={Hodge theory meets the minimal model program: a survey of log
   canonical and Du Bois singularities},
   conference={
      title={Topology of stratified spaces},
   },
   book={
      series={Math. Sci. Res. Inst. Publ.},
      volume={58},
      publisher={Cambridge Univ. Press, Cambridge},
   },
   date={2011},
   pages={51--94},
}

\bib{Sch07}{article}{
   author={Schwede, Karl},
   title={A simple characterization of Du Bois singularities},
   journal={Compos. Math.},
   volume={143},
   date={2007},
   number={4},
   pages={813--828},
}

\bib{Sum74}{article}{
   author={Sumihiro, Hideyasu},
   title={Equivariant completion},
   journal={J. Math. Kyoto Univ.},
   volume={14},
   date={1974},
   pages={1--28},
   issn={0023-608X},
   review={\MR{0337963}},
}

\bib{Tim08}{incollection}{
  author = {Dmitri A. Timashev},
  title = {Torus actions of complexity one},
  booktitle = {Toric topology},
  publisher = {Amer. Math. Soc.},
  year = {2008},
  volume = {460},
  series = {Contemp. Math.},
  pages = {349--364},
  address = {Providence, RI},
}

\bib{Vol}{article}{
    AUTHOR = {Vollmert, Robert},
     TITLE = {Toroidal embeddings and polyhedral divisors},
   JOURNAL = {Int. J. Algebra},
  FJOURNAL = {International Journal of Algebra},
    VOLUME = {4},
      YEAR = {2010},
    NUMBER = {5-8},
     PAGES = {383--388},
}

\bib{Wat}{article}{
   author={Watanabe, Keiichi},
   title={Some remarks concerning Demazure's construction of normal graded
   rings},
   journal={Nagoya Math. J.},
   volume={83},
   date={1981},
   pages={203--211},
}

\end{biblist}
\end{bibdiv}

\end{document}